\begin{document}

\title{On the Approximation of Toeplitz Operators for \\Nonparametric $\Hinf$-norm Estimation}

\author{Stephen Tu, Ross Boczar, Benjamin Recht}
\date{University of California, Berkeley}
\maketitle

%!TEX root = paper.tex

\begin{abstract}
Given a stable SISO LTI system $G$, we investigate the problem of estimating the $\Hinf$-norm of $G$, denoted $\norm{G}_\infty$, when $G$ is only accessible via noisy observations. Wahlberg et al.~\cite{wahlberg2010non} recently proposed a nonparametric algorithm based on the power method for estimating the top eigenvalue of a matrix. In particular, by applying a clever time-reversal trick, Wahlberg et al. implement the power method on the top left $n \times n$ corner $T_n$ of the Toeplitz (convolution) operator associated with $G$. In this paper, we prove sharp non-asymptotic bounds on the necessary length $n$ needed so that $\norm{T_n}$ is an $\varepsilon$-additive approximation of $\norm{G}_\infty$. Furthermore, in the process of demonstrating the sharpness of our bounds, we construct a simple family of finite impulse response (FIR) filters where the number of timesteps needed for the power method is arbitrarily \emph{worse} than the number of timesteps needed for parametric FIR identification via least-squares to achieve the same $\varepsilon$-additive approximation.
\end{abstract}

%!TEX root = paper.tex

\section{Introduction}

Given a stable discrete-time, linear time-invariant (LTI) system $G$, it is often desirable to compute an upper bound on the $\Hinf$-norm of $G$, denoted $\norm{G}_\infty$. For instance, the small-gain theorem \cite[Section 5.4]{khalil2002nonlinear} states that knowledge of an upper bound $\gamma$ immediately implies the feedback interconnection of $G$ with any system $\Delta$ satisfying $\norm{\Delta}_\infty \leq 1/\gamma$ is stable.  Furthermore, knowledge of $\Hinf$-norm bounds can be incorporated into the design of robust controllers~\cite{glover1989robust}.

When the model corresponding to $G$ is known, either via its state-space or transfer function representation, the exact computation of its $\Hinf$-norm is a well-studied problem, with algorithms that work efficiently in either representation~\cite{bruinsma1990fast}. However, when $G$ is only known from input/output observations, a different approach is needed.

Recently, Wahlberg et al. \cite{wahlberg2010non} proposed an algorithm based on the power method for estimating the top eigenvalue of a matrix.  The advantage of this algorithm is that no system identification is required. Letting $T(g)$ denote the Toeplitz (convolution) operator associated with the system $G$, the key idea is to run power iteration on the $n \times n$ upper-left submatrix of $T(g)$, which we denote as $T_n(g)$. Their main insight was showing how to compute the adjoint matrix-vector product $T_n(g)^* u$ for any input $u$, by a clever time-reversal trick. This makes it possible to implement power iteration on the matrix $T_n(g)^* T_n(g)$ without explicit knowledge of $T_n(g)$. 

Since $\norm{T_n(g)} \rightarrow \norm{T(g)}$ as $n \rightarrow \infty$, one reasonably expects the algorithm to converge to the desired $\Hinf$-norm of $G$ as the system runs for more timesteps. Wahlberg et al. observe that this occurs empirically, but do not provide a finite-time theoretical analysis of the necessary length $n$.

This paper establishes a finite condition on $n$ such that $\norm{G}_\infty \leq \norm{T_n(g)} + \varepsilon$ holds. We primarily leverage the body of work on the convergence properties of finite sub-sections of Toeplitz operators by B{\"{o}}ttcher and Grudsky \cite{bottcher2000book}. For a given system $G$, with stability radius $\rho \in (0, 1)$, we prove roughly that as long as \begin{align*} n \geq &\; \Omega\left( \frac{1}{(1-\rho)^2} \sqrt{\frac{\norm{G}_\infty}{\varepsilon}} \right)\;, \end{align*} then the additive error of $\norm{T_n(g)}$ is bounded by $\varepsilon$ as desired. A simple two degree FIR filter shows that our bound is essentially sharp.

We conclude by remarking on the surprising observation revealed by our analysis that the number of samples (timesteps) needed to estimate the $\Hinf$-norm with Wahlberg et al.'s nonparametric method can be arbitrarily larger than the number of samples needed to carry out FIR system identification, as analyzed in recent work by Tu et al.~\cite{tuboczar}. This raises an interesting question of whether or not the gap between $\Hinf$-norm estimation and FIR approximation in the $\Hinf$-norm is purely algorithmic or information-theoretic.
%!TEX root = paper.tex

\section{Related Work}

The main inspiration for this work is the nonparametric estimation procedure of Wahlberg et al.~\cite{wahlberg2010non}, who use a time-reversal scheme to query the adjoint of an unknown LTI system.  An asymptotic analysis of this algorithm is provided by Rojas et al.~\cite{rojas2012analyzing}.

An alternative approach to dealing with the uncertainty of $G$ is to use system identification techniques to estimate a model $\widehat{G}$ for $G$, and then apply $\Hinf$ algorithms on $\widehat{G}$.  There are two main approaches here: identify the transfer function, or identify a state-space representation.  For transfer function representation, the finite impulse response (FIR) identification procedure proposed by Helmicki et al.~\cite{helmicki91} is the most relevant to the $\Hinf$-estimation problem. An asymptotic analysis of least-squares for FIR identification was carried out by Ljung and Wahlberg~\cite{ljung1992asymptotic}, and more recently finite-time bounds were derived by Tu et al.~\cite{tuboczar} in a probabilistic (non-adversarial) setting. For a comprehensive survey of frequency domain techniques, see Chen and Gu~\cite{chen00}.

For state-space identification, we focus on more recent non-asymptotic results. We place emphasis on non-asymptotic results because they often offer qualitative insight into which techniques are appropriate for a particular problem instance. For more classical asymptotic results, see~\cite{ljung99}. Earlier non-asymptotic results~\cite{campi2002finite, vidyasagar2006learning} featured bounds which were conservative and even exponential in the degree of the system and other quantities. Recent work by Hardt et al.~\cite{hardt2016gradient} shows that only a polynomial number of samples are necessary to recover a state-space representation which generalizes beyond the observed input/output samples. It is not clear, however, how their statistical risk guarantee translates into guarantees for the $\Hinf$-estimation problem.

Finally, we emphasize that the mathematical analysis is primarily driven by the work of B{\"{o}}ttcher and Grudsky, who have published many convergence results of Toeplitz and related operators (see \cite{bottcher1997norms,bottcher2000book}, among others).
%!TEX root = paper.tex

\section{Notation}
We quickly fix notation: the closed unit circle in the complex plane is denoted $\Torus \defeq \set{z\in\C}{|z|=1}$ and its interior is denoted $\D$; $\Z_+$ denotes the nonnegative integers including zero and a sequence $u = (u_0,u_1,\dots)$ is said to be in $\ell^2(\Z_+)$ if $\sum_{k=0}^\infty|u_k|^2<\infty$; $L^2$ and $L^\infty$ are the square-integrable and bounded function spaces (equipped with associated norms), respectively; $\norm{\:\cdot\:}$ is the operator norm, whose domain and codomain are made clear in context.

Given a scalar sequence $\{a_k\}_{k \in \Z}$, let $T(a)$ denote the infinite Toeplitz matrix $T(a) = (a_{j-k})_{j,k=0}^{\infty}$, $T_n(a)$ denote the $n \times n$ Toeplitz matrix $T_n(a) = (a_{j-k})_{j,k=0}^{n-1}$ and $H(a) = (a_{j+k+1})_{j,k=0}^{\infty}$ denote the Hankel operator. We equip $L^2(\Torus)$ with the inner product $\ip{f}{g} = \int_{\Torus} f(z) \overline{g(z)} \; dz$, and for consistency we use the convention that $\ip{x}{y} = y^* x$ for two complex vectors in $\C^n$.
%!TEX root = paper.tex
\section{Results}

We will work with stable, discrete-time LTI SISO systems only. A stable, discrete-time LTI system $G$ can be described by its impulse response $g \in \ell^2(\Z)$, such that the output $y$ of $G$ with input $u \in \ell^2(\Z)$ is given by the convolution $y = g \ast u$. $G$ can also be described equivalently via its $z$-transform $G(z) = \sum_{k=-\infty}^{\infty} g_k z^{-k}$, where $g = \{g_k\}_{k\in \Z}$. However, for our purposes we will take $G$ to be causal and thus will only consider signals with support on $\Z_+$. A consequence of stability is the fact that $G(z)$ is analytic on the complement of $\mathbb{D}$, and hence belongs to the Hardy space $\Hinf(\Torus)$. This space is equipped with the norm
\begin{align*}
  \norm{G}_\infty := \norm{G}_{\Hinf(\Torus)} = \sup_{z \in \Torus} \abs{G(z)} \:.
\end{align*}
Since the operator $u \mapsto g \ast u$ is linear, it has an infinite-dimensional matrix representation $T(g)$ with respect to the standard basis on $\ell^2(\Z_+)$. It is straightforward to check that $T(g)$ is the lower-triangular Toeplitz matrix $(g_{j-k})_{j,k=0}^{\infty}$, where $g_{-k} = 0$ for $k \geq 1$. A well-known fact dating back to Toeplitz states that $\norm{G}_\infty = \norm{T(g)}$. Furthermore, it is also clear that $\norm{T_n(g)} \leq \norm{G}_\infty$.

The goal of this paper is to compute upper bounds on the quantity
\begin{align*}
  \norm{G}_\infty - \norm{T_n(g)}
\end{align*}
as a function of both $n$ and properties of $G$; namely, in order to be able to choose an $n$ that guarantees $\norm{G}_\infty - \norm{T_n(g)} \leq \epsilon$. By the work of B{\"{o}}ttcher and Grudsky \cite{bottcher2000book}, it is known that this quantity is upper bounded by $O(1/n^2)$, and hence $n \geq \Omega(1/\sqrt{\varepsilon})$ suffices. However, the $O(\cdot)$ of their result hides all parameters of $G$. Our contribution is to provide a system-theoretic upper bound. Specifically, we prove the following statement.
\begin{thm} \label{thm:main_result}
\textbf{(Main Result.)} Let $G(z) = \sum_{k=0}^{\infty} g_k z^{-k}$ be a stable, discrete-time LTI system with stability radius $\rho \in (0, 1)$. Fix a $\gamma \in (\rho, 1)$, and suppose that $\abs{g_0} \leq D$. For all $n \geq 3$, we have that
  \begin{align*}
    \norm{G}_\infty - \norm{T_n(g)} \leq &\; C_1 \frac{D \norm{G^\gamma}_\infty (1-\gamma^2) + \norm{G^\gamma}_\infty^2 \:\gamma}{\norm{G}_\infty(1-\gamma)^4} \frac{1}{n^2} + C_2 \frac{\norm{G^\gamma}^2_\infty}{\norm{G}_\infty (1+\gamma)(1-\gamma)^5} \frac{1}{n^3} \: \notag,
  \end{align*}
where $\norm{G^\gamma}_\infty$ denotes the $\Hinf$-norm of the system $G^\gamma := \gamma G(\gamma z)$, and $C_1, C_2$ are universal constants made explicit in the proof.
\end{thm}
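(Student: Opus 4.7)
Use the classical test-vector method: construct an explicit unit-norm $v\in\C^n$, upper-bound $\norm{G}_\infty^2 - \norm{T_n(g)v}^2$, and conclude via
\[
\norm{G}_\infty - \norm{T_n(g)} \;\leq\; \frac{\norm{G}_\infty^2 - \norm{T_n(g)v}^2}{\norm{G}_\infty + \norm{T_n(g)v}} \;\leq\; \frac{\norm{G}_\infty^2 - \norm{T_n(g)v}^2}{\norm{G}_\infty}.
\]
This identity explains the $1/\norm{G}_\infty$ in the statement: the rest of the argument will bound the numerator purely in terms of $D$, $\norm{G^\gamma}_\infty$, and $\gamma$, without any $\norm{G}_\infty$ factor. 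Since $\rho<1$, $G$ is continuous on $\Torus$; I fix $z_0\in\Torus$ with $|G(z_0)|=\norm{G}_\infty$, take the unit-norm sine window $\phi_k := \sqrt{2/(n+1)}\sin(\pi(k+1)/(n+1))$ for $k=0,\ldots,n-1$, and set $v_k:=\phi_k z_0^k$.

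\textbf{Step 2 (Autocorrelation expansion).} Expanding $\norm{T_n(g)v}^2$ and exchanging the order of summation yields
\[
\norm{T_n(g)v}^2 \;=\; \sum_d z_0^d\,k_{|d|}\,S_d, \qquad k_d \;:=\; \sum_{l=0}^{n-1-d}\phi_l\phi_{l+d},
\]
where $S_d$ is the partial autocorrelation of $g$ restricted to indices $m,m+d\in\{0,\ldots,n-1\}$. Parseval gives the companion identity $|G(z_0)|^2 = \sum_d z_0^d \tilde S_d$ with $\tilde S_d$ the analogous sum over $m\ge\max(0,-d)$ (infinite range). Subtracting,
\[
|G(z_0)|^2 - \norm{T_n(g)v}^2 \;=\; \sum_d z_0^d(1-k_{|d|})\tilde S_d \;+\; \sum_d z_0^d k_{|d|}(\tilde S_d - S_d),
\]
where the second sum is exponentially small in $n$ because $|g_m|$ decays like $\gamma^m$.

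\textbf{Step 3 (Asymptotics and assembly).} A direct trigonometric identity gives a closed form for $k_d$, and Taylor expansion around $d/n = 0$ yields
\[
1 - k_d \;=\; \frac{\pi^2 d^2}{2(n+1)^2} \;-\; \frac{\pi^2 d^3}{3(n+1)^3} \;+\; O(d^4/n^4).
\]
Cauchy's estimate on the circle $|z|=\gamma$ (valid since $\gamma>\rho$) gives $|g_m|\le\gamma^{m-1}\norm{G^\gamma}_\infty$ for $m\ge 1$; combined with $|g_0|\le D$, this yields $|\tilde S_d|\le D\norm{G^\gamma}_\infty\gamma^{d-1} + \norm{G^\gamma}_\infty^2\gamma^d/(1-\gamma^2)$. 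Substituting into the first sum and invoking the identities $\sum_d d^2\gamma^d = \gamma(1+\gamma)/(1-\gamma)^3$ and $\sum_d d^3\gamma^d = O(\gamma/(1-\gamma)^4)$, the leading $d^2/n^2$ piece of $1-k_d$ produces the $1/n^2$ contribution---the $D$-part produces the $D\norm{G^\gamma}_\infty(1-\gamma^2)/(1-\gamma)^4$ coefficient via the simplification $(1+\gamma)/(1-\gamma)^3=(1-\gamma^2)/(1-\gamma)^4$, and the $\norm{G^\gamma}_\infty^2$-part produces $\norm{G^\gamma}_\infty^2\gamma/(1-\gamma)^4$---while the $d^3/n^3$ correction produces the $1/n^3$ contribution with coefficient $\norm{G^\gamma}_\infty^2/[(1+\gamma)(1-\gamma)^5]$.

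\textbf{Main obstacle.} The principal technical step is the precise asymptotic $1-k_d = O(d^2/n^2)$. A naive estimate based only on the prefactor $(n-d)/(n+1)$ of $k_d$ would give $1-k_d = O(d/n)$, and thus only a $1/n$ final rate. The second-order smoothness requires a cancellation in the Taylor expansion between the two summands of the closed form for $k_d$; this cancellation is the mathematical heart of the argument. Once it is established, the remainder reduces to bookkeeping with standard geometric-sum identities.
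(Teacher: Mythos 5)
Your overall strategy is essentially the paper's own: the paper also proves a lower bound on $\norm{T_n(g)}$ by a test-vector argument (using the tent window given by the Fourier coefficients of $(1+z+\cdots+z^m)^2$ rather than your sine window), it bounds the second moment $\sum_d d^2 \abs{\tilde S_d}$ of the symbol's autocorrelation exactly as you do in Step 3 (this is its smoothness lemma, Lemma~\ref{lemma:smoothness}), it uses the same Cauchy estimate for the decay of $g_k$ (Lemma~\ref{lemma:decay}), and it closes with the same $(M^2-\norm{T_n(a)}^2)/M$ step. Your $1/n^2$ coefficient, including the $(1-\gamma^2)/(1-\gamma)^4$ and $\gamma/(1-\gamma)^4$ factors, comes out correctly, and your cancellation claim for $1-k_d$ is right: one computes $k_d=\frac{N-d}{N}\cos(\pi d/N)+\frac{1}{N}\cot(\pi/N)\sin(\pi d/N)$ with $N=n+1$, and the $O(d/N)$ terms cancel, leaving $1-k_d=\frac{\pi^2 d^2}{2N^2}+O(d^3/N^3)$.

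The genuine gap is in Step 2. The identity $\norm{T_n(g)v}^2=\sum_d z_0^d k_{\abs{d}}S_d$ is false: writing $(T_n(g)v)_j=\sum_{k\le j}g_{j-k}v_k$ and grouping by $m=j-k$ and $d$, the inner window sum is $\sum_{l=0}^{n-1-m-d}\phi_l\phi_{l+d}$, which is truncated in a way that depends on $m$ and does not factor as $k_{\abs{d}}S_d$. The correct bookkeeping is $\norm{T_n(g)v}^2=\sum_d z_0^d k_{\abs{d}}\tilde S_d-\sum_{j\ge n}\abs{(g\ast v)_j}^2$, where the truncation term is the time-domain avatar of the Hankel correction $P_nH(a)H(\widetilde{\overline{a}})P_n$ in Widom's identity, which the paper's proof of Lemma~\ref{lemma:approximation} estimates explicitly. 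That term is not exponentially small: since $\phi_k=O(n^{-3/2}(n-k))$ near the right edge, one gets $\sum_{j\ge n}\abs{(g\ast v)_j}^2=\Theta\bigl(\norm{G^\gamma}_\infty^2\, n^{-3}(1-\gamma)^{-5}\bigr)$, and it is precisely the source of the theorem's second term. Relatedly, attributing the $1/n^3$ contribution to the cubic Taylor correction of $1-k_d$ cannot be right: that correction enters with a favorable (negative) sign and would in any case produce a $(1-\gamma)^{-4}$ dependence, not $(1+\gamma)^{-1}(1-\gamma)^{-5}$. If you replace the exact factorization by the Parseval-plus-truncation decomposition and estimate the truncation term as above, your argument goes through and recovers the stated bound.
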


Let us interpret the result of Theorem~\ref{thm:main_result}. First, suppose that $G$ has state-space realization given as $(A, b, c^\T, 0)$. Then, the system $G^\gamma$ has state-space realization $(\gamma^{-1} A, b, c^\T, 0)$. Next, for simplicity, suppose that $D = O(\norm{G^\gamma}_\infty)$. Then, the bound in Theorem~\ref{thm:main_result} can be expressed as
\begin{align*}
  \norm{G}_\infty - \norm{T_n(g)} \leq O\left( \frac{\norm{G^{\gamma}}^2_\infty}{\norm{G}_\infty}\left( \frac{1}{(1-\gamma)^4 n^2} + \frac{1}{(1-\gamma)^5 n^3} \right)   \right) \:.
\end{align*}
For $\varepsilon$ sufficiently small, this bound states that
\begin{align*}
  n \geq \inf_{\rho < \gamma < 1} \Omega\left( \frac{1}{(1-\gamma)^2} \sqrt{\frac{\norm{G^\gamma}^2_\infty}{\norm{G}_\infty} \frac{1}{\varepsilon} } \right)
\end{align*}
is sufficient to ensure that $\norm{G}_\infty - \norm{T_n(g)} \leq \varepsilon$.
\subsection{Proof of Theorem~\ref{thm:main_result}}
We first state an approximation result from
\cite{bottcher2000book}. This is the key result which enables our analysis.
\begin{lem}[Theorem 4.1, \cite{bottcher2000book}]
\label{lemma:approximation}
Suppose that $a \in L^\infty(\Torus)$ such that
$a(z)$ satisfies the following conditions:
  \begin{enumerate}
    \item (Causality) $a_k = 0$ for all $k \leq -1$.
    \item (Decay of Fourier coefficients) There exists $C > 0$ and $\rho \in (0, 1)$ such that
      for all $k \geq 1$, we have $\abs{a_k} \leq C \rho^{k-1}$.
    \item (Smoothness) Let $\theta_0 \in [-\pi, +\pi]$ satisfy $a(e^{j\theta_0}) = \norm{a}_\infty$.
      There exists $L > 0$ such that for all $\theta \in [-\pi, +\pi]$,
      \begin{align*}
        \norm{a}^2_\infty - \abs{a(e^{j\theta})}^2 \leq L \abs{\theta - \theta_0}^2 \:.
      \end{align*}
  \end{enumerate}
Assume that $n \geq 3$. Then, there exists universal constants $C_1, C_2 > 0$ such that
  \begin{align*}
    \norm{T_n(a)} \geq &\; \norm{a}_{\infty} - C_1 \frac{L}{\norm{a}_\infty} \frac{1}{n^2} - C_2 \frac{C^2}{\norm{a}_\infty(1+\rho)(1-\rho)^5} \frac{1}{n^3} \:.
  \end{align*}
\end{lem}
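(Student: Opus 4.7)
The plan is to lower bound $\norm{T_n(a)}$ by constructing an explicit unit test vector $f_n \in \C^n$ and estimating $\norm{T_n(a) f_n}_2$. The appropriate choice is to take $f_n$ so that its associated polynomial $p_n(z) = \sum_{k=0}^{n-1} (f_n)_k z^k$ is (a rotation of) a Jackson-type kernel concentrated at the maximizer $z_0 = e^{j\theta_0}$ of $\abs{a}$ on $\Torus$. Jackson-type polynomials are the natural choice here because (i) they have unit $L^2$-mass so $\norm{f_n}_2 = 1$ by Parseval; and (ii) their angular variance $\int_\Torus (\theta-\theta_0)^2 \abs{p_n(e^{j\theta})}^2\, d\theta$ is of order $1/n^2$, which is the fastest rate achievable for a degree-$(n-1)$ trigonometric polynomial and exactly matches the $L(\theta-\theta_0)^2$ slack in the smoothness hypothesis.

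With this choice in hand, the lower bound splits into a ``bulk'' part giving the main $1/n^2$ term and a ``truncation'' part giving the lower-order $1/n^3$ term. For the bulk, causality of $a$ together with Parseval gives
\[
\norm{T_n(a) f_n}_2^2 \;=\; \int_\Torus \abs{a}^2 \abs{p_n}^2 \, dz \;-\; E_{\mathrm{trunc}},
\]
where $E_{\mathrm{trunc}}$ collects the Fourier coefficients of $a\cdot p_n$ at indices $\geq n$ that are discarded by restricting to the first $n$ coordinates. Applying the smoothness condition (3) pointwise and integrating against $\abs{p_n}^2$ yields
\[
\int_\Torus \abs{a}^2 \abs{p_n}^2 \, dz \;\geq\; \norm{a}_\infty^2 \;-\; L\int_\Torus (\theta-\theta_0)^2\abs{p_n}^2\, d\theta \;\geq\; \norm{a}_\infty^2 - \frac{c_1 L}{n^2},
\]
which, after a square root via $\sqrt{1-x}\geq 1-x$ on $[0,1]$, accounts for the $C_1 L/(\norm{a}_\infty n^2)$ term. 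The truncation $E_{\mathrm{trunc}}$ is handled using the geometric decay hypothesis (2): the $\ell^2$-tail $\sum_{m\geq n} \abs{a_m}^2$ is of order $C^2\rho^{2n}/(1-\rho^2)$, and pairing it against $p_n$ via Cauchy--Schwarz, while tracking the $L^\infty$-size of the Jackson kernel and its normalization, produces the claimed $C^2 / ((1+\rho)(1-\rho)^5 n^3)$ error.

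The main technical obstacle is the truncation step: the $1/n^3$ rate itself is not hard, but extracting exactly $(1-\rho)^{-5}$ rather than a worse exponent requires careful bookkeeping of how powers of $(1-\rho)^{-1}$ accumulate from (a) summing the geometric tail of $\{a_m\}$, (b) the $L^\infty$-normalization of the Jackson-kernel test polynomial, and (c) the convolution interaction between the kernel and the exponentially decaying sequence. A cruder estimate at this step would degrade the $(1-\rho)$-dependence while leaving the $1/n^3$ rate intact; the reason this result is quoted verbatim from \cite{bottcher2000book} is that the constants and exponents there are already tight for this type of argument.
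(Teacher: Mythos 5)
Your high-level architecture matches the paper's proof of Lemma~\ref{lemma:approximation} (which follows B\"ottcher--Grudsky): test the quadratic form of $T_n(a)^*T_n(a)$ against the Fourier coefficients of the Jackson-type polynomial $p_m^2=(1+z+\cdots+z^m)^2$ with $m\approx n/2$, use the smoothness hypothesis together with the $O(1/n^2)$ angular concentration of $\abs{p_m^2}^2$ for the first error term, and use the decay hypothesis for the second. Your identity $\norm{T_n(a)f_n}_2^2=\frac{1}{2\pi}\int\abs{a}^2\abs{p_n}^2-E_{\mathrm{trunc}}$ is exactly the causal specialization of Widom's formula (Proposition~\ref{prop:decomp}), with $E_{\mathrm{trunc}}$ equal to the Hankel correction term, so the bulk estimate proceeds the same way as in the paper.

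The truncation step, however, contains a genuine error. You attribute the $1/n^3$ term to the $\ell^2$-tail $\sum_{m\geq n}\abs{a_m}^2=O(C^2\rho^{2n}/(1-\rho^2))$. But the discarded coefficient of $a\cdot p_n$ at index $m\geq n$ is $\sum_{k=0}^{n-1}a_{m-k}(f_n)_k$, and since $k$ runs up to $n-1$ the indices $m-k$ go all the way down to $m-n+1\geq 1$; the low-order coefficients $a_1,a_2,\dots$, which are only bounded by $C$ and not by $C\rho^{n-1}$, all contribute. There is no $\rho^{2n}$ gain, and $E_{\mathrm{trunc}}$ is not exponentially small---for generic $a$ it is genuinely of order $C^2/((1+\rho)(1-\rho)^5 n^3)$. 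Moreover, the Cauchy--Schwarz pairing you describe, $\abs{(ap_n)_m}^2\leq \norm{f_n}_2^2\sum_{j=m-n+1}^{m}\abs{a_j}^2$, only yields $E_{\mathrm{trunc}}=O(C^2/(1-\rho^2)^2)$ with no decay in $n$ at all. The $n^{-3}$ actually comes from the specific shape of the Jackson coefficients: with $x_k$ as in \eqref{eq:x_coeff} one has the exact identity $\sum_{i}\rho^i \abs{x_i}=(1+\rho+\cdots+\rho^m)^2\leq(1-\rho)^{-2}$, so the Hankel quadratic form satisfies $\ip{P_nKP_nx}{x}\leq C^2(1+\rho)^{-1}(1-\rho)^{-5}$ uniformly in $n$, and dividing by $\norm{x}_2^2\gtrsim n^3$ produces the third term. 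Without this step---or some equivalent exploitation of the smallness of the test vector's coefficients near index $n-1$---your sketch does not establish the $1/n^3$ rate.
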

\begin{proof}
We modify the proof given in Theorem 4.1 of \cite{bottcher2000book},
using the stated assumptions to make the necessary simplifications along the way.
For completeness, we include the full presentation in the appendix.
\end{proof}

The remainder of the proof of Theorem~\ref{thm:main_result} involves satisfying the hypothesis of Lemma~\ref{lemma:approximation}.  Indeed, our next step is to control the smoothness constant $L$ in Lemma~\ref{lemma:approximation}.
\begin{lem}
\label{lemma:smoothness}
Let a function $a \in L^\infty(\Torus)$ be given by $a(z) = \sum_{k=0}^{\infty} a_k z^k$ and suppose for all $k \geq 1$ we have $\abs{a_k} \leq C \rho^{k-1}$ with $\rho \in (0, 1)$, and $\abs{a_0} \leq D$. Then, we have that for all $\theta$,
\begin{align*}
  \norm{a}^2_\infty - \abs{a(e^{j\theta})}^2 \leq \frac{DC(1-\rho^2) + C^2\rho}{(1-\rho)^4} (\theta - \theta_0)^2 \:.
\end{align*}
\end{lem}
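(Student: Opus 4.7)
The strategy is a one-variable Taylor expansion of $f(\theta) := |a(e^{j\theta})|^2$ around the maximizer $\theta_0$, using the exponential coefficient decay to bound the $\theta$-derivatives of $\phi(\theta) := a(e^{j\theta}) = \sum_{k \ge 0} a_k e^{jk\theta}$. Because $f$ is $2\pi$-periodic and attains its supremum $\norm{a}_\infty^2$ at $\theta_0$, we have $f'(\theta_0) = 0$, and Taylor's formula with integral remainder yields
\begin{align*}
  \norm{a}_\infty^2 - \abs{a(e^{j\theta})}^2 \;=\; f(\theta_0) - f(\theta) \;=\; -\int_{\theta_0}^{\theta}(\theta - s)\,f''(s)\,ds \;\le\; \tfrac12\,\sup_{s}\bigl(-f''(s)\bigr)\,(\theta - \theta_0)^2,
\end{align*}
so the lemma reduces to a uniform upper bound on $-f''$.

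Differentiating under the sum gives $f''(\theta) = 2\,\mathrm{Re}\bigl(\phi''(\theta)\overline{\phi(\theta)}\bigr) + 2\abs{\phi'(\theta)}^2$; discarding the nonnegative $\abs{\phi'}^2$ term produces the cleaner estimate $-f''(\theta) \le 2\abs{\phi(\theta)}\,\abs{\phi''(\theta)}$. Each factor is then controlled termwise from the hypotheses $\abs{a_k} \le C\rho^{k-1}$ for $k \ge 1$ and $\abs{a_0} \le D$, combined with the standard closed forms $\sum_{k \ge 1} k\rho^{k-1} = (1-\rho)^{-2}$ and $\sum_{k \ge 1} k^2 \rho^{k-1} = (1+\rho)(1-\rho)^{-3}$. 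These yield $\abs{\phi(\theta)} \le D + C/(1-\rho) = (D(1-\rho)+C)/(1-\rho)$ and $\abs{\phi''(\theta)} \le C(1+\rho)/(1-\rho)^3$. Multiplying the two bounds and inserting the $\tfrac12$ from the Taylor remainder collapses everything over a common denominator $(1-\rho)^4$ and produces a smoothness constant of the shape $[DC(1-\rho^2) + C^2(1+\rho)]/(1-\rho)^4$, which matches the claim in structure.

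I do not expect a genuine obstacle: the exponential decay hypothesis makes termwise differentiation of $\phi$ trivially justified, and the remaining work is arithmetic bookkeeping on the three closed-form geometric sums. Matching the paper's tighter numerator $C^2 \rho$ in place of my $C^2(1+\rho)$ --- if that constant is really intended and not a minor typo --- would require either retaining the $+2\abs{\phi'}^2$ term with its sign in $-f''$ so that it cancels part of the $\abs{\phi}\abs{\phi''}$ contribution, or sharpening $\abs{\phi(\theta)}$ near the maximizer using that $\phi'(\theta_0)\overline{\phi(\theta_0)}$ is purely imaginary; any such refinement touches only the $C^2$ coefficient and leaves the overall rate and the dominant $DC(1-\rho^2)/(1-\rho)^4$ term unchanged.
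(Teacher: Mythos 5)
Your overall strategy --- Taylor expansion of $f(\theta) := \abs{a(e^{j\theta})}^2$ about the maximizer $\theta_0$, using $f'(\theta_0)=0$ and a uniform bound on $-f''$ --- is exactly the paper's (the paper truncates to $a_N$ and passes to the limit, while you differentiate the full series directly; that difference is immaterial given the exponential decay). However, there is a genuine gap in how you bound $-f''$: discarding the nonnegative term $2\abs{\phi'}^2$ and estimating $-f'' \leq 2\abs{\phi}\,\abs{\phi''}$ yields the constant $[DC(1-\rho^2)+C^2(1+\rho)]/(1-\rho)^4$, which is strictly weaker than the claimed $[DC(1-\rho^2)+C^2\rho]/(1-\rho)^4$. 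The tighter numerator is not a typo: immediately after the lemma the paper exhibits the single-pole system $G(z)=\frac{C}{z-\rho}+D$, whose second derivative at $\theta=0$ equals $-2[DC(1-\rho^2)+C^2\rho]/(1-\rho)^4$ exactly, and the subsequent ``cannot be improved'' claim depends on matching that constant. For that same extremal example one computes $\abs{\phi'(0)}^2 = C^2/(1-\rho)^4$, which is precisely the deficit between your constant and the stated one --- so the route through $2\abs{\phi}\abs{\phi''}$ provably cannot recover the lemma as stated; the discarded term is not slack.

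The fix is the one you gesture at (retaining $2\abs{\phi'}^2$), and it is what the paper does in Fourier form. Writing $\abs{a(e^{j\theta})}^2 = \sum_\Delta b_\Delta e^{j\Delta\theta}$ with autocorrelations $b_\Delta = \sum_k a_k\overline{a_{k-\Delta}}$ gives
\begin{align*}
  f''(\theta) \;=\; -\sum_{\Delta} \Delta^2\, b_\Delta\, e^{j\Delta\theta} \;=\; -\sum_{k,l}(k-l)^2\, a_k\overline{a_l}\, e^{j(k-l)\theta}\:,
\end{align*}
so the weight $(k-l)^2$ annihilates the diagonal and penalizes near-diagonal terms; your bound effectively replaces $(k-l)^2$ by $k^2+l^2$, which does not. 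One then bounds $\abs{b_\Delta}\le DC\rho^{\Delta-1}+C^2\rho^{\Delta}/(1-\rho^2)$ for $\Delta\ge 1$ and sums $\sum_{\Delta\ge1}\Delta^2\abs{b_\Delta}$ with the closed forms you already cite, which lands exactly on the stated constant. Your alternative refinement (using that $\phi'(\theta_0)\overline{\phi(\theta_0)}$ is purely imaginary) only sharpens things at $\theta_0$ itself, not uniformly in $s$ over the Taylor remainder, so it does not rescue the $2\abs{\phi}\abs{\phi''}$ route either.
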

\begin{proof}
Fix an integer $N \geq 1$ and define $a_N(z) := \sum_{k=0}^{N-1} a_k z^k$. Let $\theta_0$ denote a frequency such that $\abs{a_N(e^{j\theta_0})} = \norm{a_N}_\infty$. Denote $M_N := \norm{a_N}_\infty$, and define the function $g_N(\theta) := M_N^2 - \abs{a_N(e^{j\theta})}^2$. It is easy to see that $g_N(\theta) \geq 0$ and $g_N(\theta)$ is infinitely differentiable. Note that $g_N$ achieves a global minimum of zero at $\theta_0$, and hence $g_N(\theta_0) = g_N'(\theta_0) = 0$. Using Taylor's theorem, there exists some point $c \in (\theta_0, \theta)$ such that
\begin{align}
  g_N(\theta) &= g_N(\theta_0) + g_N'(\theta_0)(\theta - \theta_0) + \frac{1}{2} g_N''(c)(\theta- \theta_0)^2 \nonumber \\
    &\leq \sup_{\theta \in [-\pi, +\pi]} \abs{g_N''(\theta)} \cdot  \frac{1}{2} (\theta-\theta_0)^2 \label{eq:taylor_estimate} \:.
\end{align}
Expanding $\abs{a_N(e^{j\theta})}^2$ gives
\begin{align*}
  \abs{a_N(e^{j\theta})}^2 &= a_N(e^{j\theta}) \overline{a_N(e^{j\theta})} \\
    &= \left( \sum_{k=0}^{N-1} a_k e^{j k\theta} \right) \left( \sum_{k=0}^{N-1} \overline{a_k} e^{-j k \theta} \right) \\
    &= \sum_{k,l=0}^{N-1} a_k \overline{a_l} e^{j\theta(k-l)} \\
    & = \sum_{\Delta=-(N-1)}^{N-1} b_{\Delta} e^{j\theta\Delta} = \sum_{\Delta=-(N-1)}^{N-1} b_{\Delta} \cos(\theta\Delta) \:,
\end{align*}
where $b_\Delta := \sum_{k=\max(0, \Delta)}^{\min(N-1, N-1+\Delta)} a_{k} \overline{a_{k-\Delta}}$. Note that $b_{\Delta} = \overline{b_{-\Delta}}$. Using this calculation,
\begin{align*}
    \frac{d^2}{d\theta^2} \abs{a_N(e^{j\theta})}^2 = - \sum_{\Delta=-(N-1)}^{N-1} \Delta^2 b_{\Delta} \cos(\theta \Delta) \:.
\end{align*}
Hence,
\begin{align*}
  \abs{g_N''(\theta)} \leq 2 \sum_{k = 0}^{N-1} k^2 \abs{b_k} \leq 2 \sum_{k = 0}^{\infty} k^2 \abs{b_k} \:.
\end{align*}
Using the decay assumption, for $m \geq 1$,
\begin{align*}
  \abs{b_m} \leq \sum_{l=0}^{\infty} \abs{a_l}\abs{a_{l+m}} \leq &\; DC \rho^{m-1} + \sum_{l=1}^{\infty} \abs{a_l}\abs{a_{l+m}} \\
  \leq &\;  DC \rho^{m-1} + C^2\sum_{l=0}^{\infty} \rho^{l} \rho^{l+m} \\
  = &\;  DC\rho^{m-1} + \frac{C^2 \rho^m }{1-\rho^2} \:.
\end{align*}
Therefore,
\begin{align*}
  \abs{g_N''(\theta)} \leq &\;  2  \sum_{k=1}^{\infty} k^2 \abs{b_k} = 2DC \sum_{k=1}^{\infty} k^2 \rho^{k-1} + \frac{2C^2}{1-\rho^2} \sum_{k=1}^{\infty} k^2 \rho^k \:.
\end{align*}
Now,
  \begin{align*}
    \sum_{k=1}^{\infty} k^2 \rho^{k-1} & = 1 + \sum_{k=1}^{\infty} (k+1)^2 \rho^k = \frac{1+\rho}{(1-\rho)^3}\:,\; \text{and} \\
    \sum_{k=1}^{\infty} k^2 \rho^k &= \frac{\rho(1+\rho)}{(1-\rho)^3} \:.
  \end{align*}
Hence,
  \begin{align*}
    \abs{g_N''(\theta)} \leq 2DC \frac{1+\rho}{(1-\rho)^3} + 2C^2 \frac{\rho}{(1-\rho)^4} \:.
  \end{align*}
Plugging this estimate into \eqref{eq:taylor_estimate}, we have for all $\theta$,
    \begin{align}
      M_N^2 - & \abs{a_N(e^{j\theta})}^2 \leq \left(DC \frac{1+\rho}{(1-\rho)^3} + C^2 \frac{\rho}{(1-\rho)^4}\right)  (\theta - \theta_0)^2 \:. \label{eq:decay_truncation}
    \end{align}
The above presentation was valid for arbitrary $N \geq 1$. Define $g(\theta) := \norm{a}^2_\infty - \abs{a(e^{j\theta})}^2$. The claim now follows by passing to the limit in \eqref{eq:decay_truncation}.
\end{proof}
Lemma~\ref{lemma:smoothness} cannot be improved in general. Consider the single pole system $G(z) = \frac{C}{z-\rho} + D$ with $\rho \in (0, 1)$, $C > 0$ and $D > 0$. The impulse response coefficients of $G(z)$ are all positive, and hence the $\Hinf$-norm is achieved at $\theta = 0$. A simple calculation yields that
\begin{align*}
  \frac{d^2}{d\theta^2} \abs{G(e^{j\theta})}^2 \biggl|_{\theta=0} = -\frac{2DC(1-\rho^2) + 2C^2\rho}{(1-\rho)^4} \:.
\end{align*}
Hence, for $\theta$ small, we have that
\begin{align*}
  \norm{G}^2_{\infty} - \abs{G(e^{j\theta})}^2 = \frac{DC(1-\rho^2) + C^2\rho}{(1-\rho)^4} + O(\theta^3) \:.
\end{align*}
This is the same behavior predicted by Lemma~\ref{lemma:smoothness}.

Finally, we bound the $C$ constant in the decay assumption of Lemma~\ref{lemma:approximation} in terms of the $\Hinf$-norm of $G$. The following estimate follows from Cauchy's integral formula.
\begin{lem}[Lemma 1, \cite{goldenshluger01}]
\label{lemma:decay}
Let $G(z) = \sum_{k=0}^{\infty} a_k z^{-k}$ be a stable SISO LTI system with stability radius $\rho \in (0, 1)$. Fix any $\gamma \in (\rho, 1)$. Then, for all $k \geq 1$,
  \begin{align*}
    \abs{a_k} \leq &\; \norm{G(\gamma z)}_\infty \gamma^k \:.
  \end{align*}
\end{lem}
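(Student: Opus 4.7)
The plan is a direct application of Cauchy's integral formula for Laurent coefficients, since $G(z) = \sum_{k=0}^{\infty} a_k z^{-k}$ is analytic on the exterior $\{|z| > \rho\}$ of the stability disk.

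First, because the stability radius is $\rho$ and $\gamma \in (\rho, 1)$, the circle $|z| = \gamma$ lies entirely inside the domain of analyticity of $G$. Treating the series $\sum_{k \geq 0} a_k z^{-k}$ as a Laurent expansion about the origin (with only non-positive powers of $z$), the standard Laurent coefficient formula yields, for every $k \geq 0$,
\begin{align*}
a_k = \frac{1}{2\pi i} \oint_{|z|=\gamma} G(z)\, z^{k-1}\, dz.
\end{align*}
This is the only nontrivial step, and its justification is the classical fact that the Laurent coefficient of $z^{-k}$ can be extracted by integrating against $z^{k-1}$ on any circle inside the annulus of analyticity.

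Next, I parametrize $z = \gamma e^{j\theta}$, $dz = j\gamma e^{j\theta}\, d\theta$, so that the integrand becomes $G(\gamma e^{j\theta})\, \gamma^{k}\, e^{jk\theta}$ times $d\theta$, giving
\begin{align*}
a_k = \frac{\gamma^k}{2\pi} \int_0^{2\pi} G(\gamma e^{j\theta})\, e^{jk\theta}\, d\theta.
\end{align*}
Taking absolute values and pulling the modulus inside the integral,
\begin{align*}
|a_k| \leq \frac{\gamma^k}{2\pi} \int_0^{2\pi} \bigl|G(\gamma e^{j\theta})\bigr|\, d\theta \leq \gamma^k \sup_{\theta} \bigl|G(\gamma e^{j\theta})\bigr|.
\end{align*}

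Finally, I recognize that $\sup_\theta |G(\gamma e^{j\theta})| = \sup_{z \in \Torus} |G(\gamma z)| = \|G(\gamma z)\|_\infty$, which is finite precisely because $\gamma > \rho$. This yields the desired estimate $|a_k| \leq \|G(\gamma z)\|_\infty\, \gamma^k$ for every $k \geq 1$ (the bound trivially holds for $k = 0$ as well, but is not claimed). The only conceptual point to be careful about is orientation/location of the contour relative to the poles of $G$; since all poles lie in $\{|z| < \rho\}$ and the series representation has no positive powers of $z$, the representation of $a_k$ as an integral over $|z|=\gamma$ is immediate from the Laurent expansion, so no obstacle arises.
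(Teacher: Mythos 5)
Your proof is correct and follows exactly the route the paper intends: the paper does not spell out an argument but simply cites Lemma 1 of Goldenshluger and notes that the estimate ``follows from Cauchy's integral formula,'' which is precisely the Laurent-coefficient contour integral on $|z|=\gamma$ that you carry out. The parametrization, the modulus bound, and the identification of the supremum with $\norm{G(\gamma z)}_\infty$ are all handled correctly.
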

At this point, Theorem~\ref{thm:main_result} follows directly by combining Lemma~\ref{lemma:approximation}, Lemma~\ref{lemma:smoothness}, and Lemma~\ref{lemma:decay}.

\subsection{Optimality of Theorem~\ref{thm:main_result}}
\label{sec:results:necessity}
The $O(1/n^2)$ dependence of Theorem~\ref{thm:main_result} cannot be further improved in general. We note that a similar calculation appears in Section 4.4 of \cite{bottcher2000book} regarding the convergence of norms of inverses of Toeplitz matrices.

Fix $a_0, a_1 \in \R$ and suppose both $a_0 \geq 0$ and $a_1 \geq 0$. Form the symbol $a(z) = a_0 + a_1 z$. A quick calculation shows that $\norm{a}_\infty = a_0 + a_1$. This is a special case of the fact that, if a sequence $\{a_k\}$ is non-negative, then $\norm{a}_\infty = \sum_{k} a_k$. Now, let us upper bound $\norm{T_n(a)}$. First, we observe that $T_n(a)^* T_n(a)$ has special structure,
\begin{align*}
  T_n(a)^* T_n(a) = &\; \begin{bmatrix}
    a_0^2 + a_1^2 & a_0 a_1 & 0 & \cdots & 0 \\
    a_0 a_1 & a_0^2 + a_1^2 & \ddots & \ddots & \vdots \\
    0 & \ddots & \ddots & \ddots &  0\\
    \vdots & \ddots & \ddots & a_0^2 + a_1^2 & a_0 a_1 \\
    0 & \cdots & 0 & a_0 a_1 & a_0^2
  \end{bmatrix} \\
  = &\; K - a_1^2 e_ne_n^* \:,
\end{align*}
where $K$ is a symmetric tridiagonal matrix with diagonal entries $a_0^2 + a_1^2$ and off-diagonal entries $a_0 a_1$. From the formula for the eigenvalues of a tridiagonal matrix, we have that
\begin{align*}
  \lambda_{\max}(K) = a_0^2 + a_1^2 + 2a_0a_1 \cos(\pi/(n+1)) \:.
\end{align*}
Furthermore, since $K - a_1^2 e_ne_n^* \preccurlyeq K$, we have by the Courant minimax principle that 
\begin{align*}
\lambda_{\max}(K - a_1^2 e_ne_n^*) \leq \lambda_{\max}(K)\:,
\end{align*} 
and hence,
\begin{align*}
  \norm{T_n(a)}^2 &= \lambda_{\max}(T_n(a)^* T_n(a)) \leq \lambda_{\max}(K) \\
  &= a_0^2 + a_1^2 + 2a_0a_1 \cos(\pi/(n+1)) \\
  &= \norm{T(a)}^2 - 2a_0a_1(1 - \cos(\pi/(n+1))) \:.
\end{align*}
Now, using the fact that $1-\cos(x) \geq \frac{8}{\pi^2}\left(2-\sqrt{2}\right) x^2$ for all $x \in [0, \pi/4]$, we conclude for all $n \geq 3$,
\begin{align*}
  \norm{T_n(a)}^2 \leq &\;\norm{T(a)}^2 - \frac{16}{\pi^2}\left(2-\sqrt{2}\right) \pi^2 a_0a_1/(n+1)^2 \\
  = &\; \norm{T(a)}^2 - O(a_0 a_1 n^{-2})\:.
\end{align*}
Finally, using the fact that the square root is concave, we have for all positive reals $s, t$ that $\sqrt{t} \leq \sqrt{s} + \frac{1}{2\sqrt{s}} (t - s)$. Hence, for $n$ sufficiently large,
\begin{align*}
  \norm{T_n(a)} &\leq \sqrt{\norm{T(a)}^2 - O(a_0a_1 n^{-2})} \\
                &\leq \norm{T(a)} - O\left( \frac{a_0a_1}{\norm{T(a)}} n^{-2}\right) \:.
\end{align*}
Thus, a general estimate of $\norm{T_n(a)} \geq \norm{T(a)} - O(n^{-2})$ is not improvable without further assumptions. Furthermore, if $a_0=a_1=a$, then $a_0a_1 = a^2 = \norm{T(a)}^2_\infty/4$, in which case the bound from above simplifies to
\begin{align*}
  \norm{T_n(a)} \leq \norm{T(a)} - O(\norm{T(a)} n^{-2}) \:.
\end{align*}

\section{Comparison to FIR Identification}
The filter $G(z) = a + a z^{-1}$ showcases an interesting gap between Wahlberg et al.'s method and the FIR system identification method proposed by Helmicki et al.~\cite{helmicki91} and analyzed in a probabilistic setting by Tu et al.~\cite{tuboczar}. Specifically, in the setting of Tu et al., one has access to $G$ via noisy measurements
\begin{align*}
    Y_{u} = (g \ast u)_{k=0}^{1} + \xi \:, \:\: \xi \sim N(0, \sigma^2 I) \:,
\end{align*}
where $u$ is restricted to satisfy $\norm{u}_2 \leq 1$. Theorem 1.1 from \cite{tuboczar} asserts that with probability at least $1-\delta$ over the randomness of the $\xi$'s, one can identify a length 2 FIR filter $\widetilde{G}(z) = \widetilde{g_0} + \widetilde{g_1} z^{-1}$ satisfying $\norm{G - \widetilde{G}}_\infty \leq \varepsilon$ with at most $T = O( \frac{\sigma^2}{\varepsilon^2}  \log(1/\delta) )$ timesteps. Observe that this bound is independent of the magnitude of $a$. Of course, by triangle inequality, the guarantee $\norm{G-\widetilde{G}}_\infty \leq \varepsilon$ implies the guarantee $\abs{\norm{G}_\infty - \norm{\widetilde{G}}_\infty} \leq \varepsilon$.

On the other hand, the calculations in Section~\ref{sec:results:necessity} show that $\norm{G}_\infty - \norm{T_n(G)} \geq \Omega(\norm{G}_\infty n^{-2} ) $.  Hence, for a fixed $n$, as $a \rightarrow \infty$, the gap $\norm{G}_\infty - \norm{T_n(G)}$ grows arbitrarily large as well.  That is, the length $n$ needed to ensure that $\norm{G} - \norm{T_n(G)} \leq \varepsilon$ is arbitrarily large.
%!TEX root = paper.tex
\section{Conclusion}

In this paper, we provided a non-asymptotic bound on the convergence rate of $\norm{T_n(g)}$ to $\norm{T(g)}$, utilizing the work of B{\"{o}}ttcher and Grudsky. We note that our bounds are only the first step in providing a finite-time rate of convergence of Wahlberg et al.'s method. Unfortunately, existing analysis of the (noisy) power method (see e.g. \cite{hardt14}) requires control of the eigenvalue gap $\lambda_1/\lambda_2$. In this setting, we have given upper bounds on $\lambda_1$. A lower bound on $\lambda_2$, however, involves lower bounding the second singular value of $T_n(g)$, a much more non-trivial task. We leave this analysis to future work.

Finally, we leave open the question of whether or not, from an information-theoretic standpoint, there is a gap between the sample complexity of $\Hinf$-norm estimation versus FIR identification.

\section*{Acknowledgements}
The authors thank Andrew Packard for helpful discussions and valuable feedback. RB is supported by the Department of Defense NDSEG Scholarship. BR is generously supported by NSF award CCF-1359814, ONR awards N00014-14-1-0024 and N00014-17-1-2191, the DARPA Fundamental Limits of Learning (Fun LoL) Program, a Sloan Research Fellowship, and a Google Faculty Award.

\bibliographystyle{abbrv}
\bibliography{IEEEabrv,paper}

\clearpage
%!TEX root = paper.tex

\appendix
\renewcommand{\thesection}{A}
\phantomsection
\section*{Appendix: Proof of Lemma~\ref{lemma:approximation}}

We first summarize the notation and basic techniques from Fourier analysis used in \cite{bottcher2000book}.

The Fourier transform $\F : L^2(\Torus) \longrightarrow \ell^2(\Z)$ is given by the convention
\begin{align*}
  a_n := (\F f)(n) = \frac{1}{2\pi} \int_{\Torus} f(z) \overline{\phi_n(z)} \; dz \:, \:\: \phi_n(z) = z^n \:.
\end{align*}
Hence we have the Fourier series $f = \sum_{n \in \Z} a_n \phi_n$ for $f \in L^2(\Torus)$. Given $a \in L^2(\Torus)$, let $\widetilde{a}(z) := a(z^{-1})$. Furthermore, let $P_n$ and $W_n$ denote (resp.) the projection and reversed projection operators
\begin{align*}
  & P_n :  \ell^2(\Z_+) \rightarrow \ell^2(\Z_+) \::= (x_0, x_1, x_2, \ldots) \mapsto (x_0, x_1, \ldots, x_{n-1}, 0, \ldots) \:, \\
  & W_n :  \ell^2(\Z_+) \rightarrow  \ell^2(\Z_+) \::= (x_0, x_1, x_2, \ldots) \mapsto (x_{n-1}, x_{n-2}, \ldots, x_{0}, 0, \ldots) \:.
\end{align*}
Finally, let us define the trigonometric polynomials $p_m^t$ for positive integers $m$ and $t$ as
\begin{align*}
  p_m^t(z) = (1 + z + \ldots + z^m)^t \:.
\end{align*}
The following propositions from B\"ottcher and Grudsky \cite{bottcher2000book} all follow from direct manipulations.
\begin{prop}
Given $a \in L^2(\Torus)$ and $x,y \in \C^n$, we have
\begin{align}
  & \ip{T_n(a) x}{y} = \frac{1}{2\pi} \int_{\Torus} a(z) f(z) \overline{g(z)} \; dz \:,\label{eq:toeplitz_quad_form}\\
  & \text{where } f = \sum_{k=0}^{n-1} x_k \phi_k \:, \:\: g = \sum_{k=0}^{n-1} y_k \phi_k \:.
\end{align}
\end{prop}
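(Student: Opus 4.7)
The plan is a direct calculation that matches coefficients on the two sides, using nothing more than the Fourier expansion of $a$ and orthogonality of the basis $\{\phi_n\}$ on the torus.

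First, I would expand the left-hand side combinatorially. By the convention $\ip{x}{y} = y^* x$ in $\C^n$ and the definition $T_n(a) = (a_{j-k})_{j,k=0}^{n-1}$, we have
\begin{align*}
  \ip{T_n(a) x}{y} = \sum_{j=0}^{n-1} \overline{y_j} (T_n(a) x)_j = \sum_{j,k=0}^{n-1} \overline{y_j} \, a_{j-k} \, x_k \:.
\end{align*}

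Next, I would expand the right-hand side using the Fourier series $a = \sum_{m \in \Z} a_m \phi_m$. Since $\phi_k(z) = z^k$ and $z \in \Torus$ implies $\overline{z} = z^{-1}$, one gets $\overline{g(z)} = \sum_{l=0}^{n-1} \overline{y_l}\, z^{-l}$, so that
\begin{align*}
  a(z) f(z) \overline{g(z)} = \sum_{m \in \Z} \sum_{k,l=0}^{n-1} a_m \, x_k \, \overline{y_l} \, z^{m+k-l} \:.
\end{align*}
Invoking Fubini to interchange integration with the (absolutely convergent, since the sums over $k, l$ are finite and $a \in L^2$) sums, and using the orthogonality relation $\frac{1}{2\pi} \int_{\Torus} z^p \, dz = \delta_{p,0}$ that is implicit in the stated Fourier inversion convention, only the terms with $m = l - k$ survive. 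This yields
\begin{align*}
  \frac{1}{2\pi} \int_{\Torus} a(z) f(z) \overline{g(z)} \, dz = \sum_{k,l=0}^{n-1} a_{l-k} \, x_k \, \overline{y_l} \:.
\end{align*}

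Relabeling $l \mapsto j$ matches the expression obtained for $\ip{T_n(a) x}{y}$, which completes the identification. There is no real obstacle here: the only subtleties are the sign convention in $\overline{g(z)}$ (which forces the exponent $j - k$ to appear on the left as the index of $a$ and justifies the name ``Toeplitz'') and checking that the interchange of integral and sum is legitimate, which follows from $f\overline{g}$ being a trigonometric polynomial so that $a \cdot f \overline{g} \in L^2(\Torus)$.
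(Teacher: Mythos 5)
Your computation is correct and is exactly the ``direct manipulation'' the paper alludes to for this proposition (the paper supplies no proof of its own). One small simplification: you need not expand $a$ into its Fourier series and invoke Fubini at all (which is slightly delicate since $\sum_m |a_m|$ need not converge for $a \in L^2$); since $f\overline{g}$ is a trigonometric polynomial, just write $\frac{1}{2\pi}\int_{\Torus} a(z) f(z)\overline{g(z)}\,dz = \sum_{k,l=0}^{n-1} x_k \overline{y_l}\,\frac{1}{2\pi}\int_{\Torus} a(z)\overline{\phi_{l-k}(z)}\,dz = \sum_{k,l=0}^{n-1} a_{l-k}\, x_k \overline{y_l}$ directly from the paper's definition of the Fourier coefficients.
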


\begin{prop}[Widom \cite{widom1976asymptotic}]
\label{prop:decomp}
Given $a, b \in L^2(\Torus)$, we have
\begin{align*}
  T_n(a) T_n(b) = &\; T_n(ab) - P_n H(a) H(\widetilde{b}) P_n \\
  &\; {-} \: W_n H(\widetilde{a}) H(b) W_n \:.
\end{align*}
\end{prop}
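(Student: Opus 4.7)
The plan is to prove the decomposition entrywise, comparing the $(j,k)$ matrix elements of both sides for $0 \leq j, k \leq n-1$, and observing that both tail Hankel terms arise naturally by restricting the convolution sum defining $T_n(ab)$ to the index ranges $l < 0$ and $l \geq n$ respectively.

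Concretely, I would first compute $(T_n(a) T_n(b))_{j,k} = \sum_{l=0}^{n-1} a_{j-l}\, b_{l-k}$ by direct matrix multiplication. Using the fact that the Fourier coefficients of $ab$ satisfy $(ab)_m = \sum_{l \in \Z} a_{m-l}\, b_l$, a shift gives $(T_n(ab))_{j,k} = (ab)_{j-k} = \sum_{l \in \Z} a_{j-l}\, b_{l-k}$. Subtracting yields
\begin{align*}
(T_n(ab) - T_n(a) T_n(b))_{j,k} = \sum_{l \leq -1} a_{j-l}\, b_{l-k} \; + \; \sum_{l \geq n} a_{j-l}\, b_{l-k}.
\end{align*}
So the problem reduces to identifying each tail with one of the two Hankel terms in the target identity.

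For the left tail, the substitution $l = -p - 1$ with $p \geq 0$ transforms it into $\sum_{p \geq 0} a_{j+p+1}\, b_{-(p+k+1)}$. Since $H(a)_{j,p} = a_{j+p+1}$ and $H(\widetilde{b})_{p,k} = \widetilde{b}_{p+k+1} = b_{-(p+k+1)}$, this equals $(H(a) H(\widetilde{b}))_{j,k}$, and the outer $P_n$'s only serve to zero out entries outside the $n \times n$ block. For the right tail, the substitution $l = n + p$ with $p \geq 0$ gives $\sum_{p \geq 0} a_{(j - n) - p}\, b_{p + (n - k)}$; writing $j' = n - 1 - j$, $k' = n - 1 - k$ this is $\sum_{p \geq 0} \widetilde{a}_{j' + p + 1}\, b_{p + k' + 1} = (H(\widetilde{a}) H(b))_{j', k'}$, which is precisely the $(j,k)$ entry of $W_n H(\widetilde{a}) H(b) W_n$ by the definition of $W_n$.

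The main obstacle is purely bookkeeping: keeping track of the Hankel shift by one, the coefficient reversal encoded by $\widetilde{a}$, and the order reversal induced by $W_n$, so that the reindexed tails match the correct matrix entries. Once the two substitutions $l \mapsto -p-1$ and $l \mapsto n + p$ are written down carefully, the proof is a mechanical verification requiring no analytic ingredients beyond the Fourier coefficient convolution identity for $ab$.
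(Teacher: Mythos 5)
Your entrywise verification is correct: the index substitutions $l\mapsto -p-1$ and $l\mapsto n+p$ do identify the two tails of $\sum_{l\in\Z}a_{j-l}b_{l-k}$ with $(H(a)H(\widetilde b))_{j,k}$ and $(H(\widetilde a)H(b))_{n-1-j,\,n-1-k}$ respectively, and the flip $W_n$ accounts for the index reversal in the second term. The paper gives no proof of this proposition (it simply cites Widom and notes it ``follows from direct manipulations''), and your computation is exactly the standard direct manipulation intended.
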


\begin{prop}
Given positive integers $m$ and $t$, we have
\begin{align}
  p_m^t(e^{j\theta}) = e^{jmt \theta/2} \left(\frac{\sin((m+1)\theta/2)}{\sin(\theta/2)} \right)^{t} \:. \label{eq:trig_poly_series}
\end{align}
\end{prop}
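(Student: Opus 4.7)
The plan is to reduce $p_m^t$ to a closed form via the finite geometric series and then extract half-angle phases to expose the Dirichlet-kernel structure. Concretely, for $z \neq 1$ one has the elementary identity
\begin{align*}
1 + z + z^2 + \cdots + z^m = \frac{1 - z^{m+1}}{1 - z} \:,
\end{align*}
so it suffices to evaluate this ratio at $z = e^{j\theta}$ (with $\theta \notin 2\pi \Z$) and then raise the result to the $t$-th power.

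The key computation is to factor a half-angle phase out of numerator and denominator separately. Writing $1 - e^{j(m+1)\theta} = e^{j(m+1)\theta/2}\bigl(e^{-j(m+1)\theta/2} - e^{j(m+1)\theta/2}\bigr) = -2j\, e^{j(m+1)\theta/2} \sin((m+1)\theta/2)$, and similarly $1 - e^{j\theta} = -2j\, e^{j\theta/2} \sin(\theta/2)$, dividing the two expressions causes the factors of $-2j$ to cancel and combines the exponentials into $e^{j((m+1)\theta/2 - \theta/2)} = e^{jm\theta/2}$. This yields
\begin{align*}
1 + e^{j\theta} + \cdots + e^{jm\theta} = e^{jm\theta/2}\, \frac{\sin((m+1)\theta/2)}{\sin(\theta/2)} \:.
\end{align*}
Raising both sides to the $t$-th power immediately gives the claimed identity, with the phase combining as $e^{jmt\theta/2}$.

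There is no real obstacle here; the only minor point to note is the degenerate case $\theta \in 2\pi \Z$, where the right-hand side is a $0/0$ form that must be interpreted by continuous extension. A quick check shows both sides extend continuously to the value $(m+1)^t$ at those points (directly from $p_m^t(1) = (m+1)^t$ on the left, and via L'H\^opital or a Taylor expansion of $\sin((m+1)\theta/2)/\sin(\theta/2)$ on the right), so the identity holds on all of $\Torus$ in the appropriate sense.
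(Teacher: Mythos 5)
Your proof is correct and is precisely the ``direct manipulation'' the paper alludes to (the paper omits the computation entirely): sum the geometric series, factor half-angle phases from numerator and denominator, and raise to the $t$-th power. The remark about the removable singularity at $\theta \in 2\pi\Z$ is a nice touch but not essential.
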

The next lemma estimates the growth of the $L^2(\Torus)$ norm of $p_m^t$ from below, as a function of $m$ and $t$.
\begin{lem}[Lemma 4.2, \cite{bottcher1998toeplitz}]
\label{lemma:trig_lower_bound}
Given positive integers $m$ and $t$, we have
  \begin{align*}
    \norm{p^t_m}_2^2 \geq \frac{16}{9\pi} \frac{1}{\sqrt{t}} (m+1)^{2t-1} \:. 
  \end{align*}
\end{lem}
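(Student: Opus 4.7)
The plan is to reduce $\norm{p_m^t}_2^2$ to a single integral using the closed form in \eqref{eq:trig_poly_series}, rescale to extract the geometric factor in $m$, and then apply a Laplace-type estimate to the resulting sinc integral to produce the $1/\sqrt{t}$ factor.

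First, since $\norm{f}_2^2 = \frac{1}{2\pi}\int_{-\pi}^{\pi}\abs{f(e^{j\theta})}^2\, d\theta$ on $L^2(\Torus)$, equation \eqref{eq:trig_poly_series} immediately gives
\begin{align*}
\norm{p_m^t}_2^2 = \frac{1}{2\pi}\int_{-\pi}^{\pi}\left(\frac{\sin((m+1)\theta/2)}{\sin(\theta/2)}\right)^{2t} d\theta\:.
\end{align*}
Applying the elementary inequality $\abs{\sin(\theta/2)} \leq \abs{\theta}/2$ on $[-\pi,\pi]$ to the denominator and then substituting $u = (m+1)\theta/2$ extracts the leading factor in $m$:
\begin{align*}
\norm{p_m^t}_2^2 \geq \frac{(m+1)^{2t-1}}{\pi}\int_{-\pi(m+1)/2}^{\pi(m+1)/2}\left(\frac{\sin u}{u}\right)^{2t} du\:.
\end{align*}
Because the integrand is nonnegative I am free to shrink the domain of integration; the interval $[-\sqrt{3},\sqrt{3}]$ will suffice and lies inside the domain for all $m \geq 1$.

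Second, I would estimate the restricted sinc integral by a Gaussian. The bound $\sin u \geq u - u^3/6$ for $u \geq 0$ gives $\sin u/u \geq 1 - u^2/6$, and the elementary inequality $\log(1-x) \geq -2x$ valid for $x \in [0,1/2]$ then yields $(\sin u/u)^{2t} \geq \exp(-2tu^2/3)$ on $[0,\sqrt{3}]$, and symmetrically on $[-\sqrt{3},0]$. Hence
\begin{align*}
\int_{-\sqrt{3}}^{\sqrt{3}}\left(\frac{\sin u}{u}\right)^{2t} du \geq \int_{-\sqrt{3}}^{\sqrt{3}} e^{-2tu^2/3}\, du = \sqrt{\frac{3\pi}{2t}}\,\mathrm{erf}(\sqrt{2t})\:,
\end{align*}
which is of order $1/\sqrt{t}$ with an explicit universal constant once $t \geq 1$. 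Combining this with the previous step yields the desired lower bound $\norm{p_m^t}_2^2 \geq C\, (m+1)^{2t-1}/\sqrt{t}$.

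The qualitative $(m+1)^{2t-1}/\sqrt{t}$ scaling falls out naturally from the Laplace expansion of $\sin u/u$ around its maximum at $u=0$. The main obstacle will be recovering the exact constant $16/(9\pi)$ stated in the lemma: the bound $\log(1-x) \geq -2x$ is noticeably lossy, as is $\abs{\sin(\theta/2)} \leq \abs{\theta}/2$, so pinning down the specific constant will require either a sharper pointwise lower bound on $\sin u/u$ (for instance via a tighter Taylor remainder) or a more careful choice of truncation interval, rather than any fundamentally different technique.
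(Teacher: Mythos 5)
Your proof is correct, and it is genuinely self-contained, whereas the paper supplies no proof of this lemma at all --- it is imported verbatim from B\"ottcher--Grudsky --- so there is no internal argument to compare against; your Laplace-type reduction (closed form from \eqref{eq:trig_poly_series}, rescaling $u=(m+1)\theta/2$, Gaussian minorization of the sinc kernel) is the standard and natural route. Two remarks. First, on normalization: the paper's inner product on $L^2(\Torus)$ is $\ip{f}{g}=\int_{\Torus} f\overline{g}\,dz$ with no $\tfrac{1}{2\pi}$ (this is forced by the appendix's use of Parseval in the form $\norm{x}_2^2=\tfrac{1}{2\pi}\norm{p_m^2}_2^2$), so your $\tfrac{1}{2\pi}$-normalized starting identity actually proves a statement a factor of $2\pi$ stronger than what the paper needs; harmless, but worth flagging since it removes all pressure on the constants. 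Second, and more importantly, your closing worry is unfounded: the chain of estimates you wrote down already recovers $16/(9\pi)$ with room to spare. Since $\mathrm{erf}$ is increasing and $t\geq 1$, your final display gives
\begin{align*}
\norm{p_m^t}_2^2 \;\geq\; \frac{(m+1)^{2t-1}}{\pi}\sqrt{\frac{3\pi}{2t}}\,\mathrm{erf}(\sqrt{2t}) \;\geq\; \frac{(m+1)^{2t-1}}{\sqrt{t}}\,\sqrt{\frac{3}{2\pi}}\;\mathrm{erf}(\sqrt{2})\:,
\end{align*}
and $\sqrt{3/(2\pi)}\,\mathrm{erf}(\sqrt{2})\approx 0.691\times 0.954\approx 0.66 > 16/(9\pi)\approx 0.566$; even the crude certified bound $\mathrm{erf}(\sqrt{2})\geq\mathrm{erf}(1)\geq 0.84$ (obtainable from the alternating Taylor series of $e^{-s^2}$) yields $0.691\times 0.84\approx 0.58$, which still exceeds $16/(9\pi)$. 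So no sharper Taylor remainder or more careful truncation is needed --- only this one line of arithmetic to close the argument.
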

We first prove the case when $\theta_0 = 0$. We will then argue that this is without loss of generality. First, observe that by the causality assumption, both $H(\widetilde{a})$ and $H(\overline{a})$ are the zero operator, and hence Proposition~\ref{prop:decomp} yields\footnote{Recall that $a$ is a function with Fourier coefficients $a_k$, so the $k$th Fourier coefficient of the function $\overline{a}$ is the conjugate of the $-k$th coefficient of $a$.} 
\begin{align}
  T_n(a) T_n(a)^* &\; = T_n(a) T_n(\overline{a}) = T_n(\abs{a}^2) - P_n H(a) H(\widetilde{\overline{a}}) P_n \:. \label{eq:toeplitz_decomp}
\end{align}

Put $m = \lceil \frac{n}{2} - 1 \rceil$. One readily checks that $2m < n \leq 2(m+1)$. Now, define $M := \norm{a}_\infty$. Observe that the matrix $M^2 I_n = T_n(M^2)$, where on the right hand side we treat $M^2$ as a constant function. Now, define $K := H(a) H(\widetilde{\overline{a}})$. For any $x \in \C^n$, multiplying both sides of \eqref{eq:toeplitz_decomp} by $x^*$ and $x$ respectively gives
\begin{align}
    \norm{T_n(a)}^2  \norm{x}^2_2 \geq \ip{T_n(a) T_n(\overline{a}) x}{x}
    = &\; \ip{T_n(\abs{a}^2) x}{x} - \ip{P_n K P_n x}{x} \notag \\
    = &\; M^2 \norm{x}^2_2 + \ip{(T_n(\abs{a}^2) - T_n(M^2))x}{x} - \ip{P_n K P_n x}{x} \notag \\
    \geq &\; M^2 \norm{x}^2_2 - \abs{\ip{T_n(M^2 - \abs{a}^2)x}{x}} - \ip{P_n K P_n x}{x} \:. \label{eq:starting_point}
  \end{align}
The remainder of the proof consists of choosing a particular $x$ such that bounding the 2nd and 3rd term in the last line above yields the desired inequality. The trick is to use the Fourier coefficients of $p_m^2$. By Lemma~\ref{lemma:trig_lower_bound} and the inequality $2(m+1) \geq n$,
\begin{align}
  \norm{p_m^2}_2^2 \geq \frac{16}{9 \sqrt{2}\pi} (m+1)^{3} \geq \frac{2}{9\sqrt{2} \pi} n^3 \:. \label{eq:polynomial_lower_bound}
\end{align}
Now set $x= (x_0, x_1, \ldots, x_{n-1})$ such that $x_k$ is the $k$-th Fourier coefficient of the polynomial $p_m^2$. Explicitly,
\begin{align}
x_k = \begin{cases}
    k + 1 &\text{if $0 \leq k \leq m$} \\
    2m - k + 1 &\text{if $m < k \leq 2m$}
\end{cases} \:. \label{eq:x_coeff}
\end{align}

Since $n > 2m$, $x$ includes all the non-zero Fourier coefficients of $p_m^2$.
By Parseval's identity, we have that $\norm{x}_2^2 = \frac{1}{2\pi} \norm{p_m^2}_2^2$, and hence
in light of \eqref{eq:polynomial_lower_bound},
\begin{align*}
  \norm{x}_2^2 \geq \frac{1}{9\sqrt{2} \pi^2} n^3 \:.
\end{align*}
Now combining \eqref{eq:toeplitz_quad_form} with the identity \eqref{eq:trig_poly_series},
\begin{align*}
  \abs{\ip{T_n(M^2 - \abs{a}^2) x}{x}}
    &\;=\frac{1}{2\pi} \int_{-\pi}^{\pi} (M^2 - \abs{a(e^{j\theta})}^2) \left( \frac{\sin((m+1)\theta/2)}{\sin(\theta/2)} \right)^{4} \; d\theta \\
    &\;\stackrel{(a)}{\leq} \frac{L}{2\pi} \int_{-\pi}^{\pi} \abs{\theta}^{2}\left( \frac{\sin((m+1)\theta/2)}{\sin(\theta/2)} \right)^{4} \; d\theta \\
    &\;=\frac{L}{2\pi} \int_{0 < \abs{\theta} < 1/(m+1)}\abs{\theta}^{2}\left( \frac{\sin((m+1)\theta/2)}{\sin(\theta/2)} \right)^{4} \; d\theta \\
    &\qquad{+}\:\frac{L}{2\pi} \int_{1/(m+1) < \abs{\theta} < \pi}\abs{\theta}^{2}\left( \frac{\sin((m+1)\theta/2)}{\sin(\theta/2)} \right)^{4} \; d\theta \\
    &\;\stackrel{(b)}{\leq}\frac{L}{2\pi} \int_{0 < \abs{\theta} < 1/(m+1)}\abs{\theta}^{2}  (m+1)^{4} \; d\theta \\
    &\qquad{+}\:\frac{L}{2\pi} \int_{1/(m+1) < \abs{\theta} < \pi}\abs{\theta}^{2}\left( \frac{\pi}{\abs{\theta}} \right)^{4} \; d\theta \\
    &\;\leq \frac{2L}{3\pi}(m+1) + L\pi^3(m+1) \\
    &\;= \frac{2+3\pi^4}{3\pi} L (m+1) \leq \frac{2+3\pi^4}{3\pi} L n \:.
\end{align*}
Above, (a) follows from the smoothness assumption and (b) follows by using the inequalities (i) for a positive integer $t$, $0 \leq \sin(t \theta)/\sin(\theta) \leq t$ for $\abs{\theta} \leq 1/2t$ and (ii) $\abs{\sin(\theta)} \geq \frac{2}{\pi} \abs{\theta}$ for all $\theta \in [-\pi/2, \pi/2]$. This yields an estimate of the first term.

We now proceed to estimate the second term. By definition of $K$,
\begin{align*}
  K = \left( \sum_{k=0}^{\infty} a_{i+1+k} \overline{a_{j+1+k}} \right)_{i,j=0}^{\infty} \:.
\end{align*}
Using our decay assumption $\abs{a_k} \leq C \rho^{k-1}$ for every $k \geq 1$,
\begin{align*}
  \abs{K_{ij}} \leq C^2 \sum_{k=0}^{\infty} \rho^{i+k} \rho^{j+k} =  C^2 \rho^{i+j} \sum_{k=0}^{\infty} \rho^{2k} = \frac{C^2}{1-\rho^2} \rho^{i+j} \:.
\end{align*}
Therefore,
  \begin{align*}
    \ip{P_n K P_n x}{x}\leq \sum_{i,j=0}^{n-1} \abs{K_{ij}} \abs{x_i} \abs{x_j} \leq \frac{C^2}{1-\rho^2} \sum_{i,j=0}^{n-1} \rho^{i+j} \abs{x_i}\abs{x_j} = &\; \frac{C^2}{1-\rho^2} \left(\sum_{i=0}^{n-1} \rho^i \abs{x_i} \right)^2 \\
    =  &\;\frac{C^2}{1-\rho^2} \left(\sum_{i=0}^{2m} \rho^i \abs{x_i} \right)^2 \:.
  \end{align*}
Next, we observe that
\begin{align*}
 \sum_{i=0}^{2m} \rho^i \abs{x_i} = (1 + \rho + \rho^2 + \ldots + \rho^{m})^2 \leq \frac{1}{(1-\rho)^2} \:.
\end{align*}
The first equality holds due to our particular choice of coefficients $x_k$
from \eqref{eq:x_coeff}. Combining our calculations,
\begin{align*}
  \ip{P_n K P_n x}{x} \leq \frac{C^2}{(1+\rho)(1-\rho)^5} \:.
\end{align*}
This yields a bound on the 3rd term in \eqref{eq:starting_point}. Collecting these bounds,
\begin{align*}
  \norm{T_n(a)}^2 \geq &\; M^2 -  \frac{2 + 3\pi^4}{3\pi} \frac{Ln}{\norm{x}^2_2} - \frac{1}{\norm{x}^2_2}\frac{C^2}{(1+\rho)(1-\rho)^5} \\
  \geq &\;M^2 - 3\sqrt{2} \pi(2+3\pi^4)\frac{L}{n^2} - 9\sqrt{2}\pi^2\frac{C^2}{(1+\rho)(1-\rho)^5} \frac{1}{n^3} \:.
\end{align*}
The above inequality tells us that $M^2 - \norm{T_n(a)}^2 \leq Q(n)$ for some $Q(n)$ positive.
Since 
\begin{align*}
M^2 - \norm{T_n(a)}^2 = (M + \norm{T_n(a)})(M - \norm{T_n(a)}) 
\geq M (M - \norm{T_n(a)})\:,
\end{align*}
we conclude that $M - \norm{T_n(a)} \leq Q(n)/M$. Rearranging, this finally gives
\begin{align*}
  \norm{T_n(a)} \geq  M - 3\sqrt{2}\pi(2+3\pi^4) \frac{L}{M} \frac{1}{n^2} -\: 9\sqrt{2}\pi^2 \frac{C^2}{M(1+\rho)(1-\rho)^5} \frac{1}{n^3} \:.
\end{align*}

It now remains to argue that $\theta_0 = 0$ without loss of generality. Define $g(e^{j\theta}) = a(e^{j(\theta_0 + \theta)})$. Then $\norm{g}_\infty = M$ and $\abs{g(e^{j 0})} = \abs{a(e^{j\theta_0})} = M$. It is easy to check that $g$ satisfies all the assumptions, so we know that the result holds for $g$. Furthermore, we also have that $\norm{T_n(g)} = \norm{T_n(a)}$ and $\norm{T(g)} = \norm{T(a)}$, and hence the result holds for $a$ as well.

\end{document}